\documentclass[a4j,11pt,leqno]{amsart}
\usepackage{amsmath,amssymb,amscd, amsthm}
\usepackage{epic,eepic,epsfig}
\usepackage{mathrsfs}
\usepackage[all,cmtip]{xy}
\usepackage{graphicx}
\usepackage{here}
 \usepackage[nodayofweek]{datetime}

\makeatletter
\def\@settitle{\begin{center}
  \baselineskip14\p@\relax
  \normalfont\LARGE\bfseries
  \@title
  \ifx\@subtitle\@empty\else
     \\[1ex] 
     
     \normalsize\mdseries\@subtitle
  \fi
 \ifx\@didication\@empty\else
     \\[2ex] 
     
     \large\mdseries\it\@dedication
  \fi
  \end{center}
}
\def\subtitle#1{\gdef\@subtitle{#1}}
\def\@subtitle{}
\def\dedication#1{\gdef\@dedication{#1}}
\def\@dedication{}
\makeatother

\usepackage{wrapfig}

\topmargin=0mm
\textheight=220mm
\textwidth=160mm
\evensidemargin=0mm
\oddsidemargin=0mm
\marginparwidth=0mm
\pagestyle{empty}
\makeatletter

\pagestyle{myheadings}
\makeatletter
\renewcommand{\section}{\@startsection
{section}{1}{0mm}{5mm}{2mm}{\raggedright\bfseries}}


\makeatother

\newtheorem{theorem}{Theorem}[section] 
\newtheorem{Theorem}[theorem]{Theorem}
\newtheorem{Lemma}[theorem]{Lemma}
\newtheorem{Corollary}[theorem]{Corollary}

\theoremstyle{definition}
\newtheorem{Definition}[theorem]{Definition}
\newtheorem{Example}[theorem]{Example}

\newtheorem{Remark}[theorem]{Remark}

\newtheorem{Notation}[theorem]{Notation}

\newtheorem{Note}[theorem]{Note}

\begin{document}
\DeclareRobustCommand{\magnus}{\genfrac\{\}{0pt}{}}
\def\bk{{\mathbf{k}}}
\def\MM{{\mathsf{M}}}
\def\SS{{\mathsf{S}}}
\def\dep{\mathrm{dep}}
\def\bt{{\mathbf{t}}}
\def\bs{{\mathbf{s}}}
\def\bu{{\mathbf{u}}}
\def\bbr{{\mathbf{r}}}
\def\bkappa{{\boldsymbol \kappa}}
\def\btau{{\boldsymbol \tau}}
\def\brho{{\boldsymbol \rho}}
\def\lp{\langle}
\def\rp{\rangle}
\def\blambda{{\boldsymbol \lambda}}
\def\bmu{{\boldsymbol \mu}}
\def\N{{\mathbb N}}
\def\C{{\mathbb C}}
\def\Z{{\mathbb Z}}
\def\R{{\mathbb R}}
\def\Q{{\mathbb Q}}
\def\bQ{{\overline{\mathbb Q}}}
\def\Gal{{\mathrm{Gal}}}
\def\et{\text{\'et}}
\def\ab{\mathrm{ab}}
\def\proP{{\text{pro-}p}}
\def\padic{{p\mathchar`-\mathrm{adic}}}
\def\la{\langle}
\def\ra{\rangle}
\def\scM{\mathscr{M}}
\def\lala{\la\!\la}
\def\rara{\ra\!\ra}
\def\ttx{{\mathtt{x}}}
\def\tty{{\mathtt{y}}}
\def\ttz{{\mathtt{z}}}
\def\bkappa{{\boldsymbol \kappa}}
\def\scLi{{\mathscr{L}i}}
\def\sLL{{\mathsf{L}}}
\def\cY{{\mathcal{Y}}}
\def\ad{\mathrm{ad\,}}
\def\Spec{\mathrm{Spec}\,}
\def\Ker{\mathrm{Ker}}
\def\CHplus{\underset{\mathsf{CH}}{\oplus}}
\def\check{{\clubsuit}}
\def\kaitobox#1#2#3{\fbox{\rule[#1]{0pt}{#2}\hspace{#3}}\ }
\def\vru{\,\vrule\,}
\newcommand*{\longhookrightarrow}{\ensuremath{\lhook\joinrel\relbar\joinrel\rightarrow}}
\newcommand{\hooklongrightarrow}{\lhook\joinrel\longrightarrow}
\def\nyoroto{{\rightsquigarrow}}
\newcommand{\pathto}[3]{#1\overset{#2}{\dashto} #3}
\newcommand{\pathtoD}[3]{#1\overset{#2}{-\dashto} #3}
\def\dashto{{\,\!\dasharrow\!\,}}
\def\ovec#1{\overrightarrow{#1}}
\def\isom{\,{\overset \sim \to  }\,}
\def\GT{{\widehat{GT}}}
\def\bfeta{{\boldsymbol \eta}}
\def\brho{{\boldsymbol \rho}}
\def\sha{\scalebox{0.6}[0.8]{\rotatebox[origin=c]{-90}{$\exists$}}}
\def\upin{\scalebox{1.0}[1.0]{\rotatebox[origin=c]{90}{$\in$}}}
\def\downin{\scalebox{1.0}[1.0]{\rotatebox[origin=c]{-90}{$\in$}}}
\def\torusA{{\epsfxsize=0.7truecm\epsfbox{torus1.eps}}}
\def\torusB{{\epsfxsize=0.5truecm\epsfbox{torus2.eps}}}

\title{
Demi-shuffle duals of Magnus polynomials \\
in a free associative algebra}

\author{Hiroaki Nakamura (Osaka Univeristy)}

\subjclass{16S10; 05A10, 11G55, 68R15}
\keywords{shuffle product, non-commutative polynomial, group-like series}

\address{Department of Mathematics,
Graduate School of Science,
Osaka University,
Toyonaka, Osaka 560-0043, Japan}
\email{nakamura@math.sci.osaka-u.ac.jp}

\maketitle

\markboth{H.Nakamura}
{Demi-shuffle duals of Magnus polynomials}

\begin{abstract}
We study two linear bases of the free associative algebra
$\Z\la X,Y\ra$: one is formed by the Magnus polynomials of type
$(\mathrm{ad}_X^{k_1}Y)\cdots(\mathrm{ad}_X^{k_d}Y) X^k$ 
and the other is its dual basis (formed by what we call the `demi-shuffle' polynomials)
with respect to the standard pairing on the monomials of $\Z\la X,Y\ra$.
As an application, we show a formula of Le-Murakami, Furusho type 
that expresses arbitrary coefficients of a group-like series $J\in \C\lala X,Y\rara$
in terms of the ``regular'' coefficients of $J$.  
\end{abstract}

\tableofcontents

\section{Introduction}

Let $R$ be a commutative integral domain of characteristic 0, and let $R\la X,Y\ra$ be
the free associative algebra generated over $R$ by two (non-commutative) 
letters $X$ and $Y$.
For $u,v\in R\la X,Y\ra$, we shall write $[u,v]$
to denote the Lie bracket $uv-vu$.
In \cite{M37}, W.Magnus introduced the associative subalgebra 
$S_X\subset R\la X,Y\ra$ generated by 
(what are called) the elements arising by elimination of $X$:
\begin{equation}
Y^{(0)}:=Y, \quad
Y^{(k+1)}:=[X,Y^{(k)}] \quad
(k=0,1,2,\dots),
\end{equation}
and showed that $S_X$ is freely generated by the $Y^{(k)}$
$(k=0,1,2,\dots)$.
Moreover, he derived that every element $Z$ of  $R\la X,Y\ra$ can be
written uniquely in the form
\begin{equation} \label{1st-elimination}
Z=\alpha_0 X^m +s_1 X^{m-1}+\cdots+s_m,
\end{equation}
where $\alpha_0\in R$, $s_1,\dots, s_m\in S_X$
(see \cite[Hilfssatz 2]{M37}, \cite[Lemma 5.6]{MKS}).
This observation is the first step preceding to repeated elimination
for the construction 
of the basic Lie elements (an ordered basis of free Lie algebra)
whose powered products in decreasing orders give 
Poincar\'e-Birkoff-Witt
basis of the enveloping algebra $R\la X,Y\ra$ (\cite[Theorem 5.8]{MKS}).
Apparently, this theory was historically a starting point 
toward subsequent developments of
finer constructions of free Lie algebra bases 
due to Lazard, Hall, Lyndon, Viennot and others
(cf. e.g., \cite[Notes 4.5, 5.7]{Reu}). 

In this note, we however stay on the first step of elimination 
(\ref{1st-elimination}) and 
look at 
combinatorial properties of a certain basis 
$\{\MM^{(\bk)}\}_{\bk\in \N_0^{(\infty)}}$ of $R\la X,Y\ra$
(to be called the Magnus polynomials below)
designed as follows: 

\begin{Notation} 
Let $\N_0$ denote the set of non-negative integers, and 
let 
$$
\N_0^{(\infty)}:=\bigcup_{d=0}^\infty
\left(
\prod_{k=1}^{d}\N_0\right)
\times \N_0 
$$ 
be the collection of finite sequences 
$
\bk=(k_1,\dots,k_d; k_\infty)
$
of non-negative integers equipped 
with a special last entry $k_\infty\in \N_0$. 
Here, we consider $(;k_\infty)$ also as elements of 
$\N_0^{(\infty)}$ coming from $d=0$.
For $\bk\in \N_0^{(\infty)}$, define
$|\bk|:=\sum_{i=1}^\infty k_i=k_1+\cdots+k_d+k_\infty$ 
(resp. $\dep(\bk):=d$),
and call it the size (resp. depth) of $\bk$.
\end{Notation}

\begin{Definition}[Magnus polynomial]
\label{defMagnus}
For $\bk=(k_1,\dots,k_d; k_\infty)\in\N_0^{(\infty)}$, define
$$
\MM^{(\bk)}:=Y^{(k_1)}\cdots Y^{(k_d)} \cdot X^{k_\infty} \in R\la X,Y\ra.
$$
We also set $\MM^{(;0)}=1$, $\MM^{(;k)}=X^k$ ($k=1,2,\dots)$.
Note that $\MM^{(k;0)}=Y^{(k)}$ for $k\ge 0$.
\end{Definition}

\begin{Example} 
$\MM^{(1,0;2)}=Y^{(1)}Y^{(0)}X^2=(XY-YX)YX^2=XY^2X^2-YXYX^2$.
\end{Example}

It is not difficult to see that the Magnus polynomial $\MM^{(\bk)}\in R\la X,Y\ra$ 
is homogeneous
of bidegree $(|\bk|,\dep(\bk))$ in $X$ and $Y$.

The above mentioned Magnus expression (\ref{1st-elimination})
can then be rephrased as
\begin{equation} \label{Magnus}
Z=\sum_{\bk\in \N_0^{(\infty)}}
\alpha_{\bk}\, \MM^{(\bk)}
\end{equation}
with uniquely determined coefficients $\alpha_{\bk}\in R$ for any given 
$Z\in R\la X,Y\ra$.
In other words, the collection $\{\MM^{(\bk)}\mid \bk\in  \N_0^{(\infty)}\}$ 
forms an $R$-linear basis of $R\la X,Y\ra$.

Below in \S 2, we will construct another $R$-linear basis 
$\{\SS^{(\bk)}\mid \bk\in  \N_0^{(\infty)}\}$
(formed by what we call the `demi-shuffle' polynomials)
and show that 
$\{\MM^{(\bk)}\}_\bk$
and $\{\SS^{(\bk)}\}_\bk$
are dual to each other under the standard pairing
with respect to the monomials of $R\la X,Y\ra$
(Theorem \ref{orthogonality}).
We then in \S 3 shortly generalize the duality to the case of
free associative algebras of more variables (Theorem 
\ref{orthogonality2}). 
In \S 4, we apply the formation of dual basis 
to derive a formula of Le-Murakami, Furusho type 
that expresses arbitrary coefficients of a group-like series 
$J\in R\lala X,Y\rara$ in terms of 
the ``regular'' coefficients of $J$
(Theorem \ref{LeMuFu}).  

\section{Demi-shuffle duals and array binomial coefficients}

Let $W$ be the subset of $R\la X,Y\ra$ formed by the monomials in $X,Y$ 
together with $1$, and call any element of $W$ a word. 
It is clear that $W$ forms a free monoid by the concatenation product
that restricts the multiplication of $R\la X,Y\ra$.
Each element of $R\la X,Y\ra$ is an $R$-linear combination of words in $W$.
For two elements $u,v\in R\la X,Y\ra$, define the standard pairing $\lp u,v\rp\in R$
so as to extend $R$-linearly the Kronecker symbol $\lp w,w'\rp:=\delta_{w}^{w'}\in\{0,1\}$
for words $w,w'\in W$.

\begin{Notation} We use the notation $w_\bk:=X^{k_1}Y\cdots X^{k_d}YX^{k_\infty}$
and call it the word associated to $\bk=(k_1,\dots,k_d; k_\infty)\in\N_0^{(\infty)}$.
The mapping $\bk\mapsto w_\bk$ gives a bijection between $\N_0^{(\infty)}$ onto $W$.
(Note that $w_{(;0)}=1$.)
The standard pairing $\lp w_\bk,w_{\bk'}\rp$ is equal to $0$ or $1$ 
according to whether $\bk\ne \bk'$ or $\bk=\bk'$.
\end{Notation}

The purpose of this section is to describe the dual of the 
Magnus basis
$\{\MM^{(\bk)}\}_{\bk\in \N_0^{(\infty)}}$ with respect to the standard
pairing.

\begin{Definition}[Demi-shuffle polynomial]
\label{demi-shuffle}
For $\bk=(k_1,\dots,k_d; k_\infty)\in\N_0^{(\infty)}$, define
$$
\SS^{(\bk)}:=
(\cdots((X^{k_1}Y)\sha X^{k_2})Y)\sha\cdots)\sha  X^{k_d})Y)\sha
X^{k_\infty} \in R\la X,Y\ra,
$$
where $\sha$ denotes the usual shuffle product. 
We also set $\SS^{(;0)}=1$, $\SS^{(;k)}=X^k$ ($k=1,2,\dots)$.
Note that $\SS^{(k;0)}=X^kY$ for $k\ge 0$.
\end{Definition}

The construction of $\SS^{(\bk)}$ can be interpreted 
as forming the linear sum
of all words obtained from 
the word
$w_\bk=X^{k_1}Y\cdots X^{k_d}YX^{k_\infty}$
by consecutively applying `left shuffles' of letters $X$'s 
and `concatenations' of letters $Y$'s in $w_\bk$.

\begin{Example}
Here are a few examples:
$\SS^{(0,1;0)}=(Y\sha X)Y=YXY+XYY$;
$\SS^{(1,1;0)}=((XY)\sha X)Y=XYXY+2XXYY$;
$\SS^{(1,0,1;0)}=(((XY)Y)\sha X)Y=XYYXY+XYXY^2+2X^2Y^3$.
Using the first identity, one can also compute
\begin{align*}
&\SS^{(0,1;1)}=((Y\sha X)Y)\sha X=(YXY+XYY)\sha X \\
&=(YXYX+2YXXY+XYXY)+(XYYX+XYXY+2XXYY) \\
&=2XXYY+2XYXY+XYYX+2YXXY+YXYX.
\end{align*}
\end{Example}

\begin{Theorem}[Duality] \label{orthogonality}
For $\bt, \bk\in \N_0^{(\infty)}$, we have
$$
\lp \SS^{(\bt)},\MM^{(\bk)}\rp=\delta_\bt^{\bk}.
$$
Here $\delta_\bt^{\bk}$ is the Kronecker symbol, i.e., designating $0$ or $1$
according to whether $\bt\ne\bk$ or $\bt=\bk$ respectively.
\end{Theorem}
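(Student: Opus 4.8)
The plan is to transport the pairing onto the Magnus side via adjoint operators and then induct on the depth $d=\dep(\bt)$. First I would record the two relevant adjunctions for the standard pairing: let $\partial_X$ denote the adjoint of shuffling by $X$ (so $\lp u\sha X,w\rp=\lp u,\partial_X w\rp$) and $\rho_Y$ the adjoint of right concatenation by $Y$ (so $\lp uY,w\rp=\lp u,\rho_Y w\rp$). Because $\sha$ is dual under $\lp\,,\rp$ to the unshuffling coproduct, $\partial_X$ is the linear map that deletes one occurrence of the letter $X$, summed over all its positions; in particular $\partial_X$ is a derivation for the concatenation product, with $\partial_X X=1$ and $\partial_X Y=0$. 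The map $\rho_Y$ simply erases a terminal $Y$ and annihilates any word ending in $X$.

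The key step, on which everything turns, is the annihilation identity
$$
\partial_X Y^{(k)}=0 \qquad (k\ge 0).
$$
I would prove this by induction on $k$: for $k=0$ it reads $\partial_X Y=0$, and from $Y^{(k)}=[X,Y^{(k-1)}]$ together with the derivation property and $\partial_X X=1$ one computes $\partial_X Y^{(k)}=[X,\partial_X Y^{(k-1)}]$, which vanishes by the inductive hypothesis. Granting this, the derivation property gives, for any $\bk=(k_1,\dots,k_d;k_\infty)$, that $\partial_X$ kills every Lie factor $Y^{(k_j)}$ and only sees the tail $X^{k_\infty}$, so that $\partial_X\MM^{(\bk)}=k_\infty\,\MM^{(k_1,\dots,k_d;k_\infty-1)}$. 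Since the adjoint of $\sha X^{t_\infty}$ is $\tfrac1{t_\infty!}\partial_X^{\,t_\infty}$, iterating yields
$$
\lp u\sha X^{t_\infty},\MM^{(\bk)}\rp=\binom{k_\infty}{t_\infty}\,\lp u,\MM^{(k_1,\dots,k_d;k_\infty-t_\infty)}\rp.
$$
Separately, inspecting terminal letters shows $\rho_Y\,\MM^{(k_1,\dots,k_d;k_\infty)}$ equals $\MM^{(k_1,\dots,k_{d-1};k_d)}$ when $k_\infty=0$ and vanishes when $k_\infty>0$, because a word of $\MM^{(\bk)}$ ends in $Y$ only through the $i=0$ term $X^{k_d}Y$ of $Y^{(k_d)}$.

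With these in hand I would use the recursive shape $\SS^{(\bt)}=\bigl(\SS^{(t_1,\dots,t_{d-1};t_d)}Y\bigr)\sha X^{t_\infty}$ read off the definition of the demi-shuffle polynomial. Moving the outermost shuffle and the concatenation across the pairing by the two adjunctions and substituting the two displayed formulas collapses everything to
$$
\lp\SS^{(\bt)},\MM^{(\bk)}\rp=\delta_{t_\infty}^{k_\infty}\,\lp\SS^{(t_1,\dots,t_{d-1};t_d)},\MM^{(k_1,\dots,k_{d-1};k_d)}\rp,
$$
the Kronecker factor appearing because $\binom{k_\infty}{t_\infty}$ survives only when $t_\infty=k_\infty$, where it equals $1$. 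An induction on $d$, with base case $d=0$ given by $\lp X^{t_\infty},X^{k_\infty}\rp=\delta_{t_\infty}^{k_\infty}$, then produces $\delta_\bt^{\bk}$. The cases $\dep(\bt)\ne\dep(\bk)$ are automatic, since $\SS^{(\bt)}$ and $\MM^{(\bk)}$ are then homogeneous of different $Y$-degree and the pairing vanishes.

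The main obstacle I anticipate is isolating and proving the annihilation identity $\partial_X Y^{(k)}=0$, and, relatedly, pinning down the correct adjoints: one must be sure that $\sha$ pairs with unshuffling (not deconcatenation), so that $\partial_X$ is a concatenation derivation rather than a one-sided truncation. Once that structural fact is secured, the rest is bookkeeping: the derivation property carries $\partial_X$ harmlessly past the Lie factors $Y^{(k_j)}$, and the induction on depth runs without further friction.
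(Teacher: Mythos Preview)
Your argument is correct and takes a genuinely different route from the paper's. The paper expands both $\SS^{(\bt)}$ and $\MM^{(\bk)}$ explicitly in monomials, obtaining closed product-of-binomials formulas (the ``array binomial'' coefficients $\binom{\bu}{\bt}$ for $\SS$ and the signed products $\magnus{\bk}{\bu}$ for $\MM$), and then verifies the orthogonality $\sum_\bu\magnus{\bk}{\bu}\binom{\bu}{\bt}=\delta_\bk^\bt$ by repeated use of the Vandermonde-type identity \cite[(5.24)]{GKP}, peeling off one index at a time. Your proof instead never writes down a single coefficient: it transports the outer shuffle by $X^{t_\infty}$ across the pairing via the derivation $\partial_X$, uses the key Lie-theoretic fact $\partial_X Y^{(k)}=0$ to see that $\partial_X$ acts only on the tail $X^{k_\infty}$ of $\MM^{(\bk)}$, and then strips one $Y$ with $\rho_Y$ to descend in depth. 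The student approach is cleaner and more conceptual; the paper's approach has the compensating advantage that the explicit coefficients $\binom{\bu}{\bt}$ and $\magnus{\bk}{\bu}$ are themselves reused later (notably in Lemma~\ref{lem2} and its alternative proof).

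One small wobble in your write-up: the sentence ``$\binom{k_\infty}{t_\infty}$ survives only when $t_\infty=k_\infty$'' is not literally true, since the binomial is nonzero for all $0\le t_\infty\le k_\infty$. The Kronecker $\delta_{t_\infty}^{k_\infty}$ really comes from the conjunction of two constraints: $\binom{k_\infty}{t_\infty}\ne 0$ forces $t_\infty\le k_\infty$, while $\rho_Y\,\MM^{(k_1,\dots,k_d;k_\infty-t_\infty)}\ne 0$ forces $k_\infty-t_\infty=0$. Your displayed recursion is correct; just adjust the justification accordingly.
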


Before going to the proof of the above theorem, we introduce the following
notation.

\begin{Definition}[Array binomial coefficient]
For $\bt,\bk\in  \N_0^{(\infty)}$ with $\dep(\bt)=\dep(\bk)$, 
$|\bt|=|\bk|$, define
\begin{equation} \label{ArrayBino}
\binom{\bt}{\bk}:=
\binom{t_1}{k_1}
\binom{t_1+t_2-k_1}{k_2}
\cdots
\binom{t_1+\cdots t_d-k_1-\cdots-k_{d-1}}{k_d},
\end{equation}
where $\bt=(t_1,\dots,t_d,t_\infty)$,
$\bk=(k_1,\dots,k_d,k_\infty)$.
We understand $\binom{\bt}{\bk}=1$ if
$\bt=\bk=(;N)$ for some $N\in \N_0$.
We set $\binom{\bt}{\bk}:=0$ if either $\dep(\bt)\ne\dep(\bk)$ or 
$|\bt|\ne|\bk|$ holds.
\end{Definition}

\begin{Remark} 
\label{rem2.6}
The special case $\binom{N,0,\dots,0;0}{k_1,k_2,\dots,k_d;k_\infty}$ 
is the same as the usual multinomial coefficient 
$\binom{N}{k_1,k_2,\dots,k_d,k_\infty}$ in combinatorics.
Note also that $\binom{\bt}{\bk}\ne 0$ implies $t_\infty\le k_\infty$,
as the last factor of $\binom{\bt}{\bk}$ could survive only when 
$(t_1+\cdots t_d-k_1-\cdots-k_{d-1})-k_d=k_\infty-t_\infty\ge 0$.
\end{Remark}

It turns out that the array binomial coefficients give
the expansion of $\SS^{(\bt)}$ as a linear sum of the monomials in $W$.
Recall that, for $\bt=(t_1,\dots,t_d,t_\infty)\in  \N_0^{(\infty)}$, 
$w_\bt$ denotes the word $X^{t_1}YX^{t_2}Y\cdots X^{t_d}YX^{t_\infty}\in W$.

\begin{Lemma}[Monomial expansion]
\label{dS-mono}
$$
\SS^{(\bk)}=\sum_{\bt \in\N_0^{(\infty)}}
\binom{\bt}{\bk} w_{\bt}.
$$
\end{Lemma}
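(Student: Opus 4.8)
The plan is to induct on the depth $d=\dep(\bk)$, peeling off the outermost operations in Definition \ref{demi-shuffle}. Writing $\bk=(k_1,\dots,k_d;k_\infty)$, the construction gives the two-step recursion
\[
\SS^{(k_1,\dots,k_d;0)}=\bigl(\SS^{(k_1,\dots,k_{d-1};0)}\sha X^{k_d}\bigr)Y,\qquad
\SS^{(\bk)}=\SS^{(k_1,\dots,k_d;0)}\sha X^{k_\infty},
\]
so everything comes down to understanding how one operation ``shuffle with a power of $X$, then possibly concatenate a $Y$'' acts on a monomial expansion. The base case $d=0$ is the tautology $\SS^{(;k_\infty)}=X^{k_\infty}=w_{(;k_\infty)}$, matching $\binom{\bt}{(;k_\infty)}=\delta_\bt^{(;k_\infty)}$.

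First I would record the elementary expansion of a shuffle against $X^m$. If $u=X^{c_1}YX^{c_2}Y\cdots X^{c_p}YX^{c_{p+1}}$ is a word whose maximal $X$-runs (``regions'') have lengths $c_1,\dots,c_{p+1}$, then, since $X^m$ carries no $Y$ and its letters keep their relative order, the only freedom in a shuffle is how many of the $m$ new $X$'s fall into each region; interleaving $b_r$ new letters into a region already holding $c_r$ letters can be done in $\binom{c_r+b_r}{b_r}$ ways. Hence
\[
u\sha X^m=\sum_{b_1+\cdots+b_{p+1}=m}\Bigl(\prod_{r=1}^{p+1}\binom{c_r+b_r}{b_r}\Bigr)\,X^{c_1+b_1}Y\cdots X^{c_p+b_p}YX^{c_{p+1}+b_{p+1}}.
\]

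Feeding the inductive hypothesis $\SS^{(k_1,\dots,k_{d-1};0)}=\sum_{\bs}\binom{\bs}{(k_1,\dots,k_{d-1};0)}w_{\bs}$ into this formula (the trailing region being empty in both recursions) and collecting the coefficient of a fixed word $w_\bt$ reduces the theorem to a purely numerical identity for the array binomial coefficient, of the shape
\[
\prod_{l=1}^{m}\binom{T_l-K_{l-1}}{k_l}=\sum_{\substack{s_1,\dots,s_m\ge 0\\ s_1+\cdots+s_m=K_m}}\Bigl(\prod_{l=1}^{m}\binom{S_l-K_{l-1}}{k_l}\Bigr)\prod_{j=1}^{m}\binom{t_j}{s_j},
\]
where $S_l,T_l,K_l$ denote the partial sums of the $s_j,t_j,k_j$ (and $m=d-1$ resp.\ $m=d$ for the two recursions). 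This convolution is the crux, and proving it cleanly is the main obstacle: a naive induction on $m$ stumbles, because the cumulative arguments $S_l,T_l$ obstruct a clean peeling of the last index.

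The way I would defeat this obstacle is to bypass the identity altogether with a bijective reading of the coefficient. Telescoping the per-step binomials region by region via $\prod_{j}\binom{C_j}{c_j}=\binom{C_p}{c_1,\dots,c_p}$ (with $C_j=c_1+\cdots+c_j$ the running totals) shows that the coefficient of $w_\bt$ in $\SS^{(\bk)}$ counts the ways to label each $X$-letter of $w_\bt$ by its ``block of origin'' in $\{1,\dots,d,\infty\}$ subject to: a letter in region $r$ ($1\le r\le d$) may receive any label $l\ge r$ (including $\infty$, which is ranked last) while a letter in the final region after $Y_d$ must receive the label $\infty$, and exactly $k_l$ letters carry label $l$. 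Assigning the labels $1,2,\dots,d$ in this order, at stage $l$ the admissible slots are the unlabeled $X$'s among the first $l$ regions, of which there are $T_l$ in all minus the $K_{l-1}$ already consumed by earlier labels (which all live in regions $<l$); choosing $k_l$ of them gives exactly $\binom{T_l-K_{l-1}}{k_l}$, and the label $\infty$ is then forced on the remaining $|\bk|-K_d=k_\infty$ letters (here $|\bt|=|\bk|$, else both sides vanish by Remark \ref{rem2.6}). Multiplying over $l=1,\dots,d$ yields $\binom{\bt}{\bk}$, completing the proof; the Vandermonde-type convolution displayed above is simply the two-stage version of this single count.
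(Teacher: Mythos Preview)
Your bijective argument is correct and is essentially the counting argument the paper records in Remark~\ref{rem2.6}'s companion, Remark~2.8: you label each $X$-position of $w_\bt$ by its ``block of origin'' and observe that choosing the $k_l$ positions of label $l$ among the $T_l-K_{l-1}$ still-unlabelled slots in the first $l$ regions reproduces the array binomial $\binom{\bt}{\bk}$ factor by factor. So as a proof of the lemma it stands.

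Where you diverge from the paper's \emph{main} proof is in the inductive step. You feed the hypothesis into the full region-by-region expansion of $u\sha X^m$, which indeed produces a $d$-fold convolution, and then abandon the induction. The paper avoids this convolution entirely: writing $\bk'=(k_1,\dots,k_{d-1};0)$ and using the associativity of $\sha$ to merge the two outermost shuffles as
\[
\SS^{(\bk)}=\sum_{i=0}^{k_\infty}\binom{k_d+i}{k_d}\,\bigl(\SS^{(\bk')}\sha X^{k_d+i}\bigr)\,Y\,X^{k_\infty-i}
=\sum_{i=0}^{k_\infty}\binom{k_d+i}{k_d}\,\SS^{(k_1,\dots,k_{d-1};\,k_d+i)}\,Y\,X^{k_\infty-i},
\]
only the single parameter $i=k_\infty-t_\infty$ survives; the inductive hypothesis at depth $d-1$ then supplies the first $d-1$ binomial factors directly, and $\binom{k_d+i}{k_d}=\binom{T_d-K_{d-1}}{k_d}$ is the last one. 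No convolution identity is needed. Your belief that ``a naive induction on $m$ stumbles'' is thus a bit pessimistic: the obstruction dissolves once the shuffles with $X^{k_d}$ and $X^{k_\infty}$ are combined before expanding. What your route buys is a self-contained combinatorial interpretation that makes the array binomial transparent; what the paper's induction buys is a short symbolic proof that never leaves the algebra.
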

\begin{proof}
Without loss of generality, it suffices to show 
$\la w_{\bt}, \SS^{(\bk)} \ra=\binom{\bt}{\bk}$
in the case $(N:=)\,|\bt|=|\bk|$ and $(d:=)\,\dep(\bt)=\dep(\bk)$.
The assertion is trivial when $d=0$, as then
$\bk=\bt=(;N)$, $\SS^{(\bk)}=X^N=w_\bt$ and $\binom{\bt}{\bk}=1$.
For $d>0$, we argue by induction on $d$.
Suppose $d=1$, $\bk=(k_1;k_\infty)$ and $\bt=(t_1;t_\infty)$. Then
$$
\SS^{(\bk)}=
(X^{k_1}Y)\sha X^{k_\infty}
=\sum_{i=0}^{k_\infty}(X^{k_1}\sha X^i)YX^{k_\infty-i}
=\sum_{i=0}^{k_\infty}\binom{k_1+i}{k_1}
X^{k_1+i}YX^{k_\infty-i}.
$$
Since $N=k_1+k_\infty=t_1+t_\infty$, we have
$\la  w_{\bt}, \SS^{(\bk)} \ra=\binom{k_1+k_\infty-t_\infty}{k_1}
=\binom{t_1}{k_1}$.
Suppose $d>1$ with $\bk=(k_1,\dots,k_d;k_\infty)$ and
$\bt=(t_1,\dots,t_d;t_\infty)$.
Write $\bk'=(k_1,\dots,k_{d-1};0)\in \N_0^{(\infty)}
$. Then
\begin{align*}
\SS^{(\bk)}
&=(((\SS^{(\bk')}Y)\sha X^{k_d})Y)\sha X^{k_\infty} 
\\
&=\sum_{i=0}^{k_\infty}
\SS^{(\bk')}
Y(X^{k_d}\sha X^i)YX^{k_\infty-i} 
\qquad (\text{associativity of $\sha$})
\\
&=\sum_{i=0}^{k_\infty}
\sum_{\bt'}\binom{\bt'}{\bk'}
\binom{k_d+i}{k_d}
w_{\bt'}
X^{k_d+i}YX^{k_\infty-i},
\end{align*}
where $\bt'=(t_1',\dots,t_{d-1}';t_d')\in \N_0^{(\infty)}$ runs over
those tuples 
with $t_1'+\cdots+t_d'=|\bk'|$ 
so that $\SS^{(\bk')}$ is expressed as
$\sum_{\bt'}\binom{\bt'}{\bk'}w_{\bt'}$
by the induction hypothesis on $\dep(\bk')=d-1$.
The coefficient of $w_\bt$ in $\SS^{(\bk)}$
can be found in the above summand where
$k_\infty-i=t_\infty$,
$t_d'+k_d+i=t_d$ and
$t_s'=t_s$ ($s=1,\dots,d-1$), hence
$$
\la w_\bt, \SS^{(\bk)} \ra=
\binom{t_1}{k_1}
\cdots
\binom{t_1+\cdots t_{d-1}-k_1-\cdots-k_{d-2}}{k_{d-1}}
\cdot
\binom{k_d+k_\infty-t_\infty}{k_d}.
$$
Since $N=|\bk|=|\bt|$, we have
$k_d+k_\infty-t_\infty
=t_1+\cdots t_{d}-k_1-\cdots-k_{d-1}$.
This establishes the formula  
$\la w_\bt, \SS^{(\bk)} \ra=\binom{\bt}{\bk}$.
\end{proof}

\begin{Remark}
It would be worth noting that 
Lemma \ref{dS-mono} 
can be derived from
counting $\la w_{\bt}, \SS^{(\bk)} \ra$ 
as the number of certain shuffling of letters in
$w_\bk=X^{k_1}Y\cdots X^{k_d}YX^{k_\infty}$
to produce 
$w_\bt=X^{t_1}Y\cdots X^{t_d}YX^{t_\infty}$.
Assume 
$|\bt|=|\bk|$ and $\dep(\bt)=\dep(\bk)$,
and consider letters $Y$ as partitions between 
groups of letters $X$'s in $w_\bk$ and in $w_\bt$. 
Then $\la w_{\bt}, \SS^{(\bk)} \ra$ is the number of ways
of moving some letters $X$ in $w_\bk$
to the left (beyond any number of $Y$'s)
to form the word $w_\bt$ 
without changing orders between $X$'s from the same group
in $w_\bk$.
We count this number 
by enumerating branches of possibilities
for choosing
places of $X$'s in $w_\bt$ for those moved from 
$w_\bk$ group by group. 
The first binomial factor $\binom{t_1}{k_1}$
of (\ref{ArrayBino}) is the number of ways to choose
$k_1$ places for $X$'s (coming from 
the first group in $w_\bk$) in the first group $X^{t_1}Y$ of $w_\bt$.
The second binomial factor $\binom{t_1+t_2-k_1}{k_2}$ of (\ref{ArrayBino}) 
represents the number of ways to choose 
$k_2$ places for $X$'s (coming from the second group $YX^{k_2}Y$ in 
$w_\bk$) in the first two groups $X^{t_1}YX^{t_2}Y$ of $w_\bt$ 
where already occupied $k_1$ places in the previous step
are prohibited to choose.
We continue the process in the same way. For each given 
$i\in\{2,\dots,d\}$, suppose that destinations of $X$'s in 
$X^{t_1}Y\cdots X^{t_{i-1}}$ 
from $X^{k_1}Y\cdots YX^{k_{i-1}}Y$ has already been
chosen. Then, the $i$-th binomial
factor $\binom{t_1+\cdots t_i-k_1-\cdots-k_{i-1}}{k_i}$ 
of (\ref{ArrayBino})
represents the number of ways to choose $k_i$ places for 
$X$'s (coming from the $i$-th group $YX^{k_i}Y$ in $w_\bk$)
in 
$X^{t_1}Y\cdots YX^{t_i}Y$ (the first $i$ groups of $w_\bt$):
There are $t_1+\cdots+t_i$ places for $X$ in $X^{t_1}Y\cdots YX^{t_i}Y$
but already $k_1+\cdots+k_{i-1}$ places are occupied 
by earlier choices.
Performing the process till $i=d$ 
verifies the desired identity
$\la w_\bt, \SS^{(\bk)} \ra=\binom{\bt}{\bk}$.
\end{Remark}

\begin{proof}[Proof of Theorem \ref{orthogonality}]
It is not difficult to see from the formula
$Y^{(k)}=\sum_{i=0}^k (-1)^i\binom{k}{i}X^{k-i}YX^i$ (\cite[(4)]{M37})
that the expansion of the Magnus polynomial in monomials
is given by 
\begin{equation} \label{Mag-Mono}
\MM^{(\bk)}=\sum_{\bt \in\N_0^{(\infty)}}
\magnus{\bk}{\bt} w_{\bt} 
\end{equation}
with
\begin{equation}
\label{MagnusBino}
\magnus{\bk}{\bt}:=(-1)^{\sum_{i=1}^d (d-i+1)(k_i-t_i)}
\binom{k_1}{k_1-t_1} 
\binom{k_2}{k_1+k_2-t_1-t_2} \cdots
\binom{k_d}{\sum_{i=1}^d(k_i-t_i)}
\end{equation}
for $\bt:=(t_1,\dots, t_d;t_\infty)$, $\bk:=(k_1,\dots,k_d;k_\infty)$.
Since $\lp\SS^{(\bt)},\MM^{(\bk)}\rp=\sum_{\bu\in\N_0^{(\infty)}}
\lp\SS^{(\bt)},w_\bu\rp \lp\MM^{(\bk)},w_\bu\rp$, it suffices to show
\begin{equation} \label{target}
\sum_\bu \magnus{\bk}{\bu} \binom{\bu}{\bt}=\delta_\bk^\bt.
\end{equation}
Noting that non-zero pairing $\lp\SS^{(\bt)},\MM^{(\bk)}\rp$ occurs only when
$|\bt|=|\bk|$, $\dep(\bt)=\dep(\bk)$, without loss of generality, we may
assume that $\bu$ in the above summation also runs over those
with the fixed size $N:=|\bt|=|\bk|$ and depth $d:=\dep(\bt)=\dep(\bk)$.
Then, the summation $\sum_\bu$ with $\bu=(u_1,\dots,u_d;u_\infty)$ 
has $d$ independent parameters $u_1,\dots, u_d$ that determine 
$u_\infty=N-\sum_{i=1}^d u_i$. We may also regard each $u_i$ running over $\Z$,
as the coefficients $\magnus{\bk}{\bu}$, $\binom{\bu}{\bt}$ vanish when 
combinatorial meaning is lost.
Then, in the summation $\sum_{(u_1,\dots,u_d)\in\Z^d}$ in (\ref{target}),
the partial factor of summation 
involved with the last parameter $u_d$ can be factored out in the form:
\begin{align*}
&
\sum_{u_d\in\Z}(-1)^{-u_d}
\binom{k_d}{u_d+\sum_{i=1}^{d-1}(u_i-k_i)}
\binom{u_d+\sum_{i=1}^{d-1}(u_i-t_i)}{t_d} \\
&=
(-1)^{\sum_{i=1}^{d-1}(u_i-k_i)-k_d}
\binom{\sum_{i=1}^{d-1}(k_i-t_i)}{t_d-k_d}.
\end{align*}
(Use \cite[(5.24)]{GKP}.)
Repeating this process inductively on $d$, we eventually find
$$
\lp\SS^{(\bt)},\MM^{(\bk)}\rp=
\binom{0}{t_1-k_1}
\binom{k_1-t_1}{t_2-k_2}
\binom{k_1+k_2-t_1-t_2}{t_3-k_3}
\cdots
\binom{\sum_{i=1}^{d-1}(k_i-t_i)}{t_d-k_d}
$$
which is equal to $\delta^\bk_\bt$ as desired.
\end{proof}

\begin{Corollary} \label{coro}
Each element $u\in R\la X,Y\ra$ can be written as 
$$
u=\sum_{\bk \in\N_0^{(\infty)}}
\lp\SS^{(\bk)}, u\rp \,\MM^{(\bk)}
=\sum_{\bk \in\N_0^{(\infty)}}\lp\MM^{(\bk)}, u\rp \,\SS^{(\bk)}.
$$
\end{Corollary}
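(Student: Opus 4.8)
The plan is to derive this directly from the duality of Theorem~\ref{orthogonality} by the standard argument for a pair of mutually dual bases, the only points requiring care being the well-definedness of the (a priori infinite) sums and the fact that $\{\SS^{(\bk)}\}_\bk$ is itself an $R$-linear basis.

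First I would observe that, since $u\in R\la X,Y\ra$ is a polynomial, it is a finite $R$-linear combination of words and hence is supported in only finitely many bidegrees $(N,d)$. Because each $\MM^{(\bk)}$ is homogeneous of bidegree $(|\bk|,\dep(\bk))$, the Magnus expansion \eqref{Magnus} of $u$, say $u=\sum_\bk \alpha_\bk\,\MM^{(\bk)}$, involves only finitely many nonzero $\alpha_\bk$; in particular all sums appearing in the statement reduce to finite ones and each pairing $\lp\SS^{(\bt)},u\rp$ is well-defined. Pairing the Magnus expansion on the left with $\SS^{(\bt)}$, and using bilinearity together with Theorem~\ref{orthogonality}, gives
$$
\lp\SS^{(\bt)},u\rp=\sum_\bk \alpha_\bk\,\lp\SS^{(\bt)},\MM^{(\bk)}\rp=\sum_\bk \alpha_\bk\,\delta_\bt^\bk=\alpha_\bt,
$$
which yields at once the first displayed identity $u=\sum_\bk \lp\SS^{(\bk)},u\rp\,\MM^{(\bk)}$.

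For the second identity I would first check that $\{\SS^{(\bk)}\}_\bk$ is a basis. Fixing a bidegree $(N,d)$, both the $\MM^{(\bk)}$ and the $\SS^{(\bk)}$ with $(|\bk|,\dep(\bk))=(N,d)$ lie in the finite-rank free $R$-module spanned by the words $w_\bt$ of that bidegree. Writing the two transition matrices from \eqref{Mag-Mono} and Lemma~\ref{dS-mono}, the relation $\lp\SS^{(\bt)},\MM^{(\bk)}\rp=\sum_\bu \lp\SS^{(\bt)},w_\bu\rp\lp\MM^{(\bk)},w_\bu\rp=\delta_\bt^\bk$ says precisely that the demi-shuffle transition matrix is a one-sided inverse of the transpose of the Magnus transition matrix; since the latter is invertible over $R$ (as $\{\MM^{(\bk)}\}$ is a basis), the demi-shuffle matrix coincides with that inverse and is itself invertible, so $\{\SS^{(\bk)}\}_\bk$ is a basis. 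Expanding $u=\sum_\bk \beta_\bk\,\SS^{(\bk)}$, pairing on the left with $\MM^{(\bt)}$, and using the symmetry $\lp\MM^{(\bt)},\SS^{(\bk)}\rp=\lp\SS^{(\bk)},\MM^{(\bt)}\rp=\delta_\bt^\bk$ of the standard pairing together with Theorem~\ref{orthogonality}, I obtain $\lp\MM^{(\bt)},u\rp=\beta_\bt$, which is the second identity.

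There is no genuine obstacle beyond bookkeeping: the entire content is already packaged in Theorem~\ref{orthogonality}, and what remains is the formal fact that a square matrix over the commutative ring $R$ which is a one-sided inverse of an invertible matrix is its two-sided inverse. The only places demanding attention are the reduction of all sums to finite ones via homogeneity of both families, and the invocation of the symmetry of $\lp\,,\,\rp$ when passing from the first expansion to the second.
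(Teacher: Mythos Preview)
Your argument is correct and is exactly the standard dual-basis reasoning the paper implicitly relies on: the paper states the corollary without proof, merely remarking afterward that only finitely many summands are nonzero. Your verification via the Magnus expansion \eqref{Magnus}, Theorem~\ref{orthogonality}, and the square-matrix inversion showing $\{\SS^{(\bk)}\}_\bk$ is a basis fills in precisely the expected details.
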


Note that only a finite number of summands are nonzero
in either summation above. 

\section{Generalization to the case $R\la X,Y_1,Y_2,\cdots \ra$}

It is not difficult to generalize the above duality in $R\la X,Y\ra$ 
(Theorem  \ref{orthogonality})
to similar duality in $R\la X,Y_\lambda \ra_{\lambda\in \Lambda}$ 
($\Lambda$: a nonempty index set), viz. in 
the associative algebra freely generated by
the symbols $X,Y_\lambda$ $(\lambda\in\Lambda)$ over $R$. 
In fact, introducing 
\begin{equation}
Y_\lambda^{(0)}:=Y_\lambda, \quad
Y_\lambda^{(k+1)}:=[X,Y_\lambda^{(k)}] \quad
(\lambda\in\Lambda, k=0,1,2,\dots)
\end{equation}
that are called the elements arising by elimination of $X$,
Magnus (\cite[Hilfssatz 2]{M37}, \cite[Lemma 5.6]{MKS})
showed that every element $Z$ of $R\la X,Y_\lambda \ra_{\lambda\in \Lambda}$ 
has the unique expression (\ref{1st-elimination}) 
with 
$S_X$ 
the subalgebra 
freely generated by the
$Y_\lambda^{(k)}$ ($k\in\N_0,\lambda\in\Lambda$).

\begin{Definition}[Depth-varied Magnus/demi-shuffle polynomials and monomials] 
\label{def3.1}
Let $d$ be a positive integer. 
For  $\bk=(k_1,\dots,k_d; k_\infty)\in\N_0^{(\infty)}$ and a 
finite sequence $\blambda=(\lambda_1,\dots,\lambda_d)\in\Lambda^d$, 
define
\begin{align*}
&\MM^{(\bk,\blambda)}:=Y_{\lambda_1}^{(k_1)}\cdots Y_{\lambda_d}^{(k_d)} \cdot X^{k_\infty};\\
&\SS^{(\bk,\blambda)}:=(\cdots((X^{k_1}Y_{\lambda_1})\sha X^{k_2})Y_{\lambda_2})\sha\cdots\sha ) X^{k_d})Y_{\lambda_d})\sha X^{k_\infty}; \\
&w_{\bk,\blambda}:=X^{k_1}Y_{\lambda_1}\cdots X^{k_d}Y_{\lambda_d}X^{k_\infty}.
\end{align*}
For $d=0$ with $\bk=(;k)$, $\blambda=()$, 
we simply set $w_{(;k),()}=\MM^{((;k),())}=\SS^{((;k),())}=X^k$.
\end{Definition}

Note that the monomials $w_{\bk,\blambda}$ ($\bk\in\N_0^{(\infty)}$,
$\blambda\in\Lambda^{\dep(\bk)})$
form an $R$-linear basis of $R\la X,Y_\lambda \ra_{\lambda\in \Lambda}$.
Let us write $\la\ ,\ \ra$ for the standard pairing defined by the Kronecker
symbol with respect to these monomials.

\begin{Theorem}[Duality] \label{orthogonality2}
For $\bt, \bk\in \N_0^{(\infty)}$ and $\blambda\in\Lambda^{\dep(\bt)}$,
$\bmu\in\Lambda^{\dep(\bk)}$, we have
$$
\lp \SS^{(\bt,\blambda)},\MM^{(\bk,\bmu)}\rp=\delta_{(\bt,\blambda)}^{(\bk,\bmu)}.
$$
Here $\delta_{(\bt,\blambda)}^{(\bk,\bmu)}$ is the Kronecker symbol, 
i.e., designating $1$ or $0$
according to whether the pairs $(\bt,\blambda)$ and $(\bk,\bmu)$ 
coincide or not respectively.
\end{Theorem}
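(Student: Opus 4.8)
The plan is to reduce the multi-variable duality to the single-variable case already settled in Theorem~\ref{orthogonality}, by a color-tracking argument that exploits the fact that neither the demi-shuffle construction of $\SS$ nor the elimination construction of $\MM$ ever permutes the $Y$-letters among themselves. Concretely, I would first isolate a structural invariant of both constructions, use it to dispose of all the cases where the color data disagree, and then transport the single-variable computation verbatim in the remaining matching case.

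The crucial observation I would record is that in \emph{every} monomial occurring in $\SS^{(\bt,\blambda)}$, the subsequence of $Y$-letters read from left to right is exactly $Y_{\lambda_1},\dots,Y_{\lambda_d}$, and likewise every monomial in $\MM^{(\bk,\bmu)}$ has $Y$-subsequence $Y_{\mu_1},\dots,Y_{\mu_d}$. For $\MM$ this is immediate from the expansion $Y_{\mu_j}^{(k_j)}=\sum_i(-1)^i\binom{k_j}{i}X^{k_j-i}Y_{\mu_j}X^i$, which shows that the $j$-th factor contributes a single $Y_{\mu_j}$ surrounded by $X$'s. For $\SS$ one argues along the inductive build-up of the demi-shuffle: each shuffle step interleaves only a pure $X$-block $X^{k_i}$ into the word accumulated so far, and each concatenation appends a single $Y_{\lambda_i}$ on the right; since shuffling blocks that contain no $Y$ can never transpose two $Y$'s, their left-to-right color sequence is frozen as $\lambda_1,\dots,\lambda_d$.

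From this observation the pairing vanishes unless the color data agree. If $\dep(\bt)\ne\dep(\bk)$, or $\dep(\bt)=\dep(\bk)=d$ but $\blambda\ne\bmu$ as sequences in $\Lambda^d$, then no monomial of $\SS^{(\bt,\blambda)}$ can coincide with any monomial of $\MM^{(\bk,\bmu)}$ (they already differ in their $Y$-color subsequences), so the diagonal pairing gives $0=\delta_{(\bt,\blambda)}^{(\bk,\bmu)}$. In the remaining matching case $\dep(\bt)=\dep(\bk)=d$ and $\blambda=\bmu$, both $\SS^{(\bt,\blambda)}$ and $\MM^{(\bk,\blambda)}$ lie in the $R$-span of the monomials $\{w_{\bt',\blambda}\}$ carrying this one fixed color sequence, and the color-forgetting homomorphism $\pi\colon R\la X,Y_\lambda\ra_{\lambda\in\Lambda}\to R\la X,Y\ra$, $Y_\lambda\mapsto Y$, restricts there to an isometry $w_{\bt',\blambda}\mapsto w_{\bt'}$ onto the corresponding single-variable span. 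Since $\pi$ commutes with shuffle, concatenation and the bracket $[X,-]$, it sends $\SS^{(\bt,\blambda)}\mapsto\SS^{(\bt)}$ and $\MM^{(\bk,\blambda)}\mapsto\MM^{(\bk)}$ with identical coefficients (the $\binom{\bt'}{\bt}$ of Lemma~\ref{dS-mono} and the $\magnus{\bk}{\bt'}$ of~(\ref{MagnusBino})), whence
$$
\lp\SS^{(\bt,\blambda)},\MM^{(\bk,\blambda)}\rp
=\sum_{\bt'}\binom{\bt'}{\bt}\magnus{\bk}{\bt'}
=\lp\SS^{(\bt)},\MM^{(\bk)}\rp=\delta_\bt^\bk,
$$
the last equality being Theorem~\ref{orthogonality}. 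Combining the two cases yields $\lp\SS^{(\bt,\blambda)},\MM^{(\bk,\bmu)}\rp=\delta_{(\bt,\blambda)}^{(\bk,\bmu)}$.

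I expect the only genuinely substantive step to be the invariance of the $Y$-color subsequence under the two constructions; everything afterwards is bookkeeping that carries the single-variable identity across. The mild point to get right is the inductive verification for $\SS$, where one must check that appending $Y_{\lambda_i}$ and then shuffling in $X^{k_{i+1}}$ cannot move a later $Y$ past an earlier one—valid precisely because the shuffled blocks are $Y$-free.
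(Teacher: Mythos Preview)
Your argument is correct and is essentially the same as the paper's: both reduce to the single-variable duality of Theorem~\ref{orthogonality} via an isometry between the span of depth-$d$ words in $R\la X,Y\ra$ and the span $V_\blambda$ of monomials with fixed $Y$-color sequence $\blambda$, after first observing that $\SS^{(\bt,\blambda)}\in V_\blambda$ and $\MM^{(\bk,\bmu)}\in V_\bmu$ (which handles all mismatched-color cases by orthogonality). The only cosmetic difference is the direction of the isometry---the paper inserts colors via $\phi_\blambda\colon w_\bk\mapsto w_{\bk,\blambda}$, whereas you forget them via $\pi\colon Y_\lambda\mapsto Y$---and you spell out more carefully why the $Y$-color subsequence is preserved under each construction, a point the paper leaves implicit.
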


\begin{proof}
Given a fixed $\blambda=(\lambda_1,\dots,\lambda_d)\in\Lambda^d$, 
let $V_\blambda$ be the $R$-linear 
subspace of $R\la X,Y_\lambda \ra_{\lambda\in \Lambda}$ generated by
the monomials $\{w_{\bk,\blambda}\mid\bk\in\N_0^{(\infty)}, \dep(\bk)=d\}$.
It is obvious that if $\blambda\ne\bmu$ then 
$V_\blambda$ and $V_\bmu$ are mutually orthogonal under the standard pairing $\la\ ,\ \ra$.
Since $\MM^{(\bk,\bmu)}\in V_\bmu$, $\SS^{(\bt,\blambda)}\in V_\blambda$,
we only need to look at the case $\bmu=\blambda\in \Lambda^d$. 
Consider the $R$-linear subspace $V_d$ of $R\la X,Y\ra$ generated by 
$\{w_\bk\mid \bk\in \N_0^{(\infty)}, \dep(\bk)=d\}$. Then, the mapping
$w_\bk\mapsto w_{\bk,\blambda}$ defines an isometry, i.e., an $R$-linear 
isomorphism $\phi_\blambda:V_d \isom V_\blambda$ preserving $\la\ ,\ \ra$.
The assertion then follows at once from Theorem \ref{orthogonality}
after observing $\phi_\blambda( \SS^{(\bt)})=\SS^{(\bt,\blambda)}$ and
$\phi_\blambda(\MM^{(\bk)})=\MM^{(\bk,\blambda)}$.
\end{proof}

\section{Application to a formula of Le-Murakami and Furusho type}

In this section, we assume that $R$ is a field and consider 
$R\la X,Y\ra$ as a subalgebra of the ring of non-commutative
formal power series $R\lala X,Y\rara$, where a standard
comultiplication $\Delta$ is defined by setting
$\Delta(a)=1\otimes a+a\otimes 1$ for $a\in\{X,Y\}$.  
An element $J\in R\lala X,Y\rara$ is called group-like if
it has constant term 1 and satisfies $\Delta(J)=J\otimes J$. 
There are many group-like elements; 
for example, the subgroup multiplicatively generated by $\exp(X)$ and $\exp(Y)$
in $R\lala X,Y\rara^\times$ consists of group-like elements and forms a free group of rank 2. 

\begin{Theorem}[Le-Murakami, Furusho type formula]
\label{LeMuFu}
Let $J\in R\lala X,Y\rara$ be a group-like element in the form
$$
J=\sum_{\bk\in\N_0^{(\infty)}} c_\bk w_\bk,
$$
and write $c_X$ for the coefficient $c_{(;1)}$ of $X$ in $J$. 
Then,
$$
c_{(k_1,\dots,k_d;k_\infty)}=\sum_{\substack{s,t\ge 0 \\ s+t=k_\infty}}
(-1)^s
\frac{{(c_X)}^t}{t!}
\sum_{\substack{ s_1,\dots,s_d\ge 0 \\ s=s_1+\dots+s_d}}
\binom{k_1+s_1}{k_1}\cdots \binom{k_d+s_d}{k_d}
c_{(k_1+s_1,\dots,k_d+s_d;0)}
\ .
$$
\end{Theorem}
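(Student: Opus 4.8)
The plan is to use the one genuine hypothesis on $J$, namely that it is \emph{group-like}, through the classical fact that the comultiplication $\Delta$ (for which $X,Y$ are primitive) is adjoint to the shuffle product: for $u,v\in R\la X,Y\ra$ one has $\la u\sha v,J\ra=\la u\otimes v,\Delta(J)\ra$. Writing $a(u):=\la u,J\ra$ for the coefficient functional, the group-like condition $\Delta(J)=J\otimes J$ then forces $a(u\sha v)=a(u)\,a(v)$, i.e. $a$ is an $R$-algebra homomorphism from the shuffle algebra to $R$. In particular, since $X^{\sha t}=t!\,X^t$, we get $a(X^{\sha t})=a(X)^t=c_X^{\,t}$. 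Now the quantity to compute is $c_{(k_1,\dots,k_d;k_\infty)}=a\bigl(w_{(k_1,\dots,k_d;k_\infty)}\bigr)$ while the ``regular'' coefficients on the right are $c_{(m_1,\dots,m_d;0)}=a\bigl(w_{(m_1,\dots,m_d;0)}\bigr)$; so everything reduces to rewriting the word $w_\bk$, \emph{inside the shuffle algebra}, as a combination of regular words shuffled against powers of $X$.

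Accordingly, the heart of the argument is a purely algebraic identity in $R\la X,Y\ra$ (with no reference to $J$):
\[
w_{(k_1,\dots,k_d;k_\infty)}
=\sum_{\substack{s,t\ge0\\ s+t=k_\infty}}\frac{(-1)^s}{t!}
\Bigl(\,\sum_{\substack{s_1,\dots,s_d\ge0\\ s_1+\cdots+s_d=s}}
\binom{k_1+s_1}{k_1}\cdots\binom{k_d+s_d}{k_d}\,
w_{(k_1+s_1,\dots,k_d+s_d;0)}\Bigr)\sha X^{\sha t}.
\]
Granting this, I apply $a$ to both sides: by the homomorphism property each summand becomes $a(\text{regular word})\cdot a(X^{\sha t})=c_{(k_1+s_1,\dots,k_d+s_d;0)}\,c_X^{\,t}$, and the factor $\tfrac1{t!}$ recombines with $a(X^{\sha t})=c_X^{\,t}$ into exactly the $c_X^{\,t}/t!$ of the statement. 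Thus the theorem is \emph{equivalent} to the displayed shuffle identity.

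To prove that identity I would introduce the $R$-linear ``exponential generating'' map $\Phi$ on the depth-$d$ part of $R\la X,Y\ra$ sending $w_{(m_1,\dots,m_d;n)}\mapsto \frac{x_1^{m_1}}{m_1!}\cdots\frac{x_d^{m_d}}{m_d!}\frac{z^{\,n}}{n!}\in R[x_1,\dots,x_d,z]$. A one-line count of single-letter insertions gives $w\sha X=\sum_{i=1}^d(m_i+1)\,w_{(\dots,m_i+1,\dots;n)}+(n+1)\,w_{(m_1,\dots,m_d;n+1)}$, which under $\Phi$ is precisely multiplication by $\sigma+z$, where $\sigma:=x_1+\cdots+x_d$; hence $\Phi(u\sha X^{\sha t})=(\sigma+z)^t\,\Phi(u)$. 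Using $\binom{k_i+s_i}{k_i}\frac{x_i^{k_i+s_i}}{(k_i+s_i)!}=\frac{x_i^{k_i}}{k_i!}\frac{x_i^{s_i}}{s_i!}$ and the multinomial theorem, the inner sum maps to $\Phi\bigl(w_{(k_1,\dots,k_d;0)}\bigr)\cdot\frac{\sigma^{s}}{s!}$, so the entire right-hand side maps to $\Phi\bigl(w_{(k_1,\dots,k_d;0)}\bigr)\sum_{s+t=k_\infty}\frac{(-\sigma)^{s}}{s!}\frac{(\sigma+z)^{t}}{t!}$. The last sum collapses by the binomial theorem to $\frac{1}{k_\infty!}\bigl(-\sigma+(\sigma+z)\bigr)^{k_\infty}=\frac{z^{k_\infty}}{k_\infty!}$, whence the right-hand side equals $\prod_i\frac{x_i^{k_i}}{k_i!}\cdot\frac{z^{k_\infty}}{k_\infty!}=\Phi\bigl(w_{(k_1,\dots,k_d;k_\infty)}\bigr)$, matching the left-hand side. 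Since $\Phi$ sends the monomial basis to distinct nonzero scalar multiples of distinct monomials it is injective, so the identity holds in $R\la X,Y\ra$ itself.

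The genuinely nontrivial point --- and the step I expect to be the main obstacle --- is the \emph{inversion that produces the alternating signs}. The forward direction (shuffling extra $X$'s into a regular word) is immediate, but recovering $w_\bk$ with a \emph{prescribed} trailing $X^{k_\infty}$ requires undoing that mixing, and it is exactly the substitution $\sha X\leftrightarrow(\sigma+z)$ that turns this inversion into the trivial telescoping $\bigl(-\sigma+(\sigma+z)\bigr)^{k_\infty}=z^{k_\infty}$. Finally I would remark that no convergence issue arises in passing between $R\la X,Y\ra$ and $R\lala X,Y\rara$: the identity and the coefficient extraction both take place within a single bidegree $(|\bk|,\dep(\bk))$, where all sums are finite, and the argument is uniform in $d\ge0$ (the case $d=0$ reducing to $w_{(;k_\infty)}=\tfrac1{k_\infty!}X^{\sha k_\infty}$, i.e. $c_{(;k_\infty)}=c_X^{\,k_\infty}/k_\infty!$).
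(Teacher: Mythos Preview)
Your argument is correct, and it takes a genuinely different route from the paper's proof.

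The paper does not reduce the statement to a shuffle identity among words. Instead it works in the completed tensor product $\mathscr{A}=R\lala X,Y\rara\,\bar\otimes\,R\lala X,Y\rara$ with the (shuffle)$\times$(concatenation) product, expresses the identity endomorphism as $\sum_{\bt}\SS^{(\bt)}\otimes\MM^{(\bt)}$ via the Magnus/demi-shuffle duality of Theorem~\ref{orthogonality}, factors this as $\bigl(\sum_{\btau}\SS^{(\btau;0)}\otimes\MM^{(\btau;0)}\bigr)\cdot\bigl(\sum_t X^t\otimes X^t\bigr)$, and applies the resulting convolution to $J$ using $\Delta(J)=J\otimes J$. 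The coefficient extraction then requires a separate binomial lemma (Lemma~\ref{lem2}), proved by iterated use of Vandermonde-type identities from \cite{GKP}. In short, the paper's proof is designed to showcase the duality machinery developed earlier, and the combinatorics is packaged into Lemma~\ref{lem2}.

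Your approach bypasses all of this: you never touch $\MM^{(\bk)}$ or $\SS^{(\bk)}$, and your key step---the exponential generating map $\Phi$ turning $\sha X$ into multiplication by $\sigma+z$---reduces the entire combinatorial content to the one-line telescoping $(-\sigma+(\sigma+z))^{k_\infty}=z^{k_\infty}$. This is more elementary and self-contained, and in fact proves a strictly stronger statement (an identity in $R\la X,Y\ra$ rather than merely after pairing with a group-like $J$). What the paper's route buys is that it exhibits the theorem as a direct application of the $\{\MM^{(\bk)}\}$/$\{\SS^{(\bk)}\}$ duality, which is the structural point of the article; your route shows that this particular corollary does not actually require that duality.
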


We first prove an elementary identity that will be used for the proof of the above formula.

\begin{Lemma} \label{lem2}
Let $\bkappa=(k_1,\dots,k_d)\in\N_0^d$ and $\bs=(s_1,\dots,s_d)\in \Z^d$
satisfy $s=s_1+\cdots+s_d\ge 0$ and $k_i+s_i\ge 0$ $(i=1,\dots,d)$.
Then,  we have
$$
\sum_{\btau\in\N_0^d} 
\lp\SS^{(\btau;0)},w_{(\bkappa+\bs;0)}\rp
\cdot \lp\MM^{(\btau;0)}, w_{(\bkappa;s)}\rp
=(-1)^s
\binom{k_1+s_1}{k_1}\cdots \binom{k_d+s_d}{k_d}.
$$
\end{Lemma}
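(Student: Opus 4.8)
The plan is to turn the asserted equality into a concrete (alternating) sum of products of binomial coefficients and then to evaluate it. Both factors of the summand are already available in closed form: by Lemma \ref{dS-mono} the first is the array binomial coefficient $\lp\SS^{(\btau;0)},w_{(\bkappa+\bs;0)}\rp=\binom{(\bkappa+\bs;0)}{(\btau;0)}$, and by \eqref{Mag-Mono}--\eqref{MagnusBino} the second is the Magnus coefficient $\magnus{(\btau;0)}{(\bkappa;s)}$. Both vanish unless $\dep(\btau)=d$ and $|\btau|=|\bkappa|+s$, so the sum over $\btau\in\N_0^{d}$ is finite. I would substitute the two formulas and reparametrize the summation by the partial sums $D_i:=\sum_{j=1}^{i}(\tau_j-k_j)$ (so that $D_0=0$, $D_d=s$, and $\tau_i=k_i+D_i-D_{i-1}$); writing $\sigma_i:=s_1+\cdots+s_i$, the claim becomes
$$\sum_{D_1,\dots,D_{d-1}}(-1)^{\sum_{i=1}^{d-1}D_i}\prod_{i=1}^{d}\binom{k_i+\sigma_i-D_{i-1}}{\tau_i}\binom{\tau_i}{D_i}=\prod_{i=1}^{d}\binom{k_i+s_i}{k_i},$$
the overall sign $(-1)^s$ of the statement being exactly the factor $(-1)^{D_d}$ pulled out of the Magnus sign $(-1)^{\sum_{i=1}^{d}D_i}$.

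\textbf{Induction on the depth.} I would prove this binomial identity by induction on $d$, writing $F_d(\bkappa,\bs)$ for its left-hand side. The base case $d=1$ is immediate: there is no free index, $D_1=s=s_1$ and $\tau_1=k_1+s_1$, so the single summand is $\binom{k_1+s_1}{k_1+s_1}\binom{k_1+s_1}{s_1}=\binom{k_1+s_1}{k_1}$. For the inductive step the algebraic engine is the trinomial-revision identity $\binom{A}{B}\binom{B}{C}=\binom{A}{C}\binom{A-C}{B-C}$ together with the Vandermonde-type summation \cite[(5.24)]{GKP} already used in the proof of Theorem \ref{orthogonality}. Applied to the $i=1$ factor, with $D_0=0$ and $\sigma_1=s_1$, trinomial revision gives $\binom{k_1+s_1}{k_1+D_1}\binom{k_1+D_1}{D_1}=\binom{k_1+s_1}{k_1}\binom{s_1}{D_1}$, pulling out the clean scalar $\binom{k_1+s_1}{k_1}$; the aim is then to carry out the sum over the innermost index $D_1$ so as to obtain the recursion $F_d(\bkappa,\bs)=\binom{k_1+s_1}{k_1}\,F_{d-1}(\bkappa',\bs')$ with $\bkappa'=(k_2,\dots,k_d)$ and $\bs'=(s_2,\dots,s_d)$, whence the product formula follows.

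\textbf{The main obstacle and a combinatorial alternative.} The hard part is precisely this $D_1$-summation, because the indices form a coupled chain: after the splitting, $D_1$ still occurs inside the $i=2$ factor (through $\tau_2=k_2+D_2-D_1$ and the entry $k_2+\sigma_2-D_1$), so a single Vandermonde convolution cannot eliminate it. To control the coupling I expect to carry one auxiliary generating-function variable through the induction; writing the two surviving $D_1$-dependent binomials of the $i=2$ factor as coefficient extractions of powers of $(1+x)$ and $(1+y)$ and summing $\sum_{D_1}(-1)^{D_1}\binom{s_1}{D_1}(\cdots)$ produces a residual factor $(x+y+xy)^{s_1}$ that must be reconciled with the next factor before the depth genuinely drops to $F_{d-1}$; making this bookkeeping collapse cleanly is the heart of the argument. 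A more combinatorial route worth recording reads $\lp\SS^{(\btau;0)},w_{(\bkappa+\bs;0)}\rp$ through the lattice description in the Remark after Lemma \ref{dS-mono} — the number of order-preserving leftward displacements of the $X$'s of $w_{(\btau;0)}$ that yield $w_{(\bkappa+\bs;0)}$ — and pairs it against the signed Magnus weights $\prod_i(-1)^{D_i}\binom{\tau_i}{D_i}$, seeking a sign-reversing involution that cancels every off-diagonal configuration and leaves exactly the $\prod_i\binom{k_i+s_i}{k_i}$ independent within-group interleavings.
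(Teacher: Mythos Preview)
Your reduction and reparametrization are correct, and the base case is fine, but the argument stops at the decisive point: you explicitly leave the $D_1$-summation open, and neither the two-variable generating-function sketch nor the sign-reversing involution is carried out. As written this is an outline, not a proof.

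The obstacle you diagnose is in fact illusory. The coupling disappears if you apply trinomial revision not only to the $i=1$ factor but uniformly to every factor \emph{before} summing. With $\tau_i=k_i+D_i-D_{i-1}$ one has $\binom{\tau_i}{D_i}=\binom{\tau_i}{k_i-D_{i-1}}$, and hence
\[
\binom{k_i+\sigma_i-D_{i-1}}{\tau_i}\binom{\tau_i}{D_i}
=\binom{k_i+\sigma_i-D_{i-1}}{k_i-D_{i-1}}\binom{\sigma_i}{D_i}
=\binom{k_i+\sigma_i-D_{i-1}}{\sigma_i}\binom{\sigma_i}{D_i}.
\]
Now $D_1$ occurs only in $\binom{s_1}{D_1}$ (your $i=1$ output) and in $\binom{k_2+\sigma_2-D_1}{\sigma_2}$ (from $i=2$), and a single alternating Vandermonde
\[
\sum_{D_1}(-1)^{D_1}\binom{s_1}{D_1}\binom{k_2+\sigma_2-D_1}{\sigma_2}=\binom{k_2+s_2}{s_2}
\]
(instance of \cite[(5.24)]{GKP}) eliminates $D_1$ and delivers the next factor $\binom{k_2+s_2}{k_2}$. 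Iterating with $\binom{\sigma_2}{D_2}$ against the $i=3$ factor, etc., peels off one binomial per step and finishes the induction you intended. No auxiliary generating variables are needed.

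For comparison, the paper's direct proof performs the same two moves (trinomial revision \cite[(5.21)]{GKP} followed by \cite[(5.24)]{GKP}) but from the opposite end: it treats $t_d$ as determined and eliminates $t_{d-1}$ first, then $t_{d-2}$, and so on. The paper also records a second, more structural proof (Note~\ref{Quest4.3}) that bypasses these binomial manipulations: it interprets the right-hand side as $(-1)^s\lp w_{(\bkappa;0)}\sha X^s,\,w_{(\bkappa+\bs;0)}\rp$, expands both words in the dual bases via Corollary~\ref{coro}, uses $\SS^{(\brho;0)}\sha X^s=\SS^{(\brho;s)}$ together with Theorem~\ref{orthogonality}, and reduces everything to a termwise comparison of the explicit formulas \eqref{ArrayBino} and \eqref{MagnusBino}.
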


\begin{proof}
We shall compute the LHS explicitly
as the sum over 
$\btau\in\N_0^d$ satisfying
$\sum_{i=1}^d t_i=\sum_{i=1}^d (k_i+s_i)$
with
$$
\la\SS^{(\btau;0)},w_{(\bkappa+\bs;0)}\ra=\binom{(\bkappa+\bs;0)}{(\btau;0)}
=\binom{k_1+s_1}{t_1} 
\cdots
\binom{\sum_{i=1}^{d-1}(k_i+s_i)-\sum_{i=1}^{d-2}t_i}{t_{d-1}}
\binom{t_d}{t_d}
$$
by Lemma \ref{dS-mono} and with
$$
\la\MM^{(\btau;0)}, w_{(\bkappa;s)}\ra
=\magnus{(\btau;0)}{(\bkappa;s)}
=(-1)^{s+\sum_{i=1}^{d-1}(d-i)(t_i-k_i)}
\binom{t_1}{t_1-k_1}
\cdots
\binom{t_{d-1}}{\sum_{i=1}^{d-1}(t_i-k_i)}
\binom{t_d}{s}
$$
by (\ref{Mag-Mono}) and
$s=\sum_{i=1}^{d}(t_i-k_i)$. 
Note that,
since $\binom{(\bkappa+\bs;0)}{(\btau;0)}\magnus{(\btau;0)}{(\bkappa;s)}\ne 0$ 
only when all entries of $\btau=(t_1,\dots,t_d)$ are nonnegative
and $t_1+\cdots+t_d=\sum_{i=1}^d (k_i+s_i)$ (constant),
the above sum
can be taken over the tuples $(t_1,\dots,t_{d-1})\in\Z^{d-1}$
with entries running as independent integers.
Then, the partial summation involved with the last variable 
$t_{d-1}$ may be factored out as
\begin{align*}
&\sum_{t_{d-1}}(-1)^{t_{d-1}}
\binom{\sum_{i=1}^{d-1}(k_i+s_i)-\sum_{i=1}^{d-2}t_i}{t_{d-1}}
\binom{t_{d-1}}{\sum_{i=1}^{d-1}(t_i-k_i)}
\binom{t_d}{s} \\
&=\sum_{t_{d-1}}(-1)^{t_{d-1}}
\binom{\sum_{i=1}^{d-1}(k_i+s_i)-\sum_{i=1}^{d-2}t_i}{ \sum_{i=1}^{d-1}k_i-\sum_{i=1}^{d-2}t_i }
\binom{\sum_{i=1}^{d-1} s_i}{\sum_{i=1}^{d-1}(t_i-k_i)}
\binom{\sum_{i=1}^d(k_i+s_i)-\sum_{i=1}^{d-1}t_i}{\sum_{i=1}^{d} s_i} \\
&=\binom{\sum_{i=1}^{d-1}(k_i+s_i)-\sum_{i=1}^{d-2}t_i}{\sum_{i=1}^{d-1} s_i}
(-1)^{\sum_{i=1}^{d-1}k_i -\sum_{i=1}^{d-2} t_i}
\binom{k_d+s_d}{s_d},
\end{align*}
where \cite[(5.21)]{GKP} is applied for the first equality and 
\cite[(5.24)]{GKP} for the second.
After factoring out the constant $\binom{k_d+s_d}{s_d}$ and repeating
the similar process with the other variables $t_{d-2},\dots, t_1$ consecutively,
we eventually obtain the asserted formula.
Below in Note \ref{Quest4.3}, we also provide an alternative proof of the lemma 
free from intricate use of 
\cite[(5.21),(5.24)]{GKP}.
\end{proof}

\begin{proof}[Proof of Theorem \ref{LeMuFu}]
We argue in the beautiful framework exploited in Reutenauer's book \cite[1.5]{Reu}
using the complete tensor product
$$
\mathscr{A}=R\lala X,Y\rara \bar\otimes R\lala X,Y\rara
$$
equipped with a product induced from 
the shuffle product (resp. the concatenation product)
on the left (resp. right) of $\bar\otimes$. 
Recall that the ring of
$R$-linear endomorphism $\mathrm{End}_RR\lala X,Y\rara$
can be embedded into $\mathscr{A}$ by 
$f\mapsto \sum_{w\in W} w\otimes f(w)$, and that the product of 
$\mathscr{A}$ restricts to the convolution product of $\mathrm{End}_RR\lala X,Y\rara$
defined by $f\ast g:=\mathrm{conc}\circ(f\otimes g)\circ \Delta$
(`$\mathrm{conc}$' means concatenation of left and right sides of $\otimes$).
Note that, for $f\in\mathrm{End}_R R\lala X,Y\rara$ and  $J\in R\lala X,Y\rara$,
we have $f(J)=\sum_{w\in W} \lp w,J\rp f(w)$.

Since, by Corollary \ref{coro}, every word $w$ can be written as 
$\sum_{\bt\in \N_0^{(\infty)}}
\lp \SS^{(\bt)},w \rp \MM^{(\bt)}$, 
the element of $\mathscr{A}$ corresponding to 
the identity $\mathrm{id}\in \mathrm{End}_RR\lala X,Y\rara$ is:
\begin{align*}
\sum_{w\in W} w\otimes w &=
\sum_w w\otimes \sum_\bt \lp \SS^{(\bt)},w \rp\MM^{(\bt)} 
=
\sum_\bt (\sum_w  \lp\SS^{(\bt)},w\rp w) \otimes \MM^{(\bt)} \\
&= 
\sum_\bt \SS^{(\bt)} \otimes \MM^{(\bt)} \\
&=
\left(\sum_{d=0}^\infty 
\sum_{\btau\in\N_0^d} \SS^{(\btau;0)}\otimes \MM^{(\btau;0)}\right)
\cdot
\left(\sum_{t=0}^\infty X^t\otimes X^t\right),
\end{align*}
where used are 
$\SS^{(\bt)}=\SS^{(\btau;t)}=\SS^{(\btau;0)}\sha X^t$ and 
$\MM^{(\bt)}=\MM^{(\btau;t)}=\MM^{(\btau;0)}\cdot X^t$.
Observing that both factors of the above last side correspond to specific 
$R$-linear endomorphisms,
we can apply $\mathrm{id}$ to $J$ as the convolution product of them and find from
$\Delta(J)=J\otimes J$ that
\begin{equation} \label{J-expansion}
J=\mathrm{id}(J)=
\left(\sum_{d=0}^\infty 
\sum_{\btau\in\N_0^d} \lp \SS^{(\btau;0)},J \rp \MM^{(\btau;0)}\right)
\left(\sum_{t=0}^\infty \frac{(c_X)^t}{t !} X^t\right).
\end{equation}
Note here that the pairing of $J$ with 
$X^t=X^{\sha\, t}/t!$ is equal to $(c_X)^t/t !$,
as easily seen from the fact that the specialization 
$J(X,0)\in R\lala X\rara$ at $Y=0$ is a group like
element $\exp(c_X\cdot X)$.
To settle the proof of Theorem \ref{LeMuFu},
given a fixed $\bk=(\bkappa;k_\infty)=(k_1,\dots,k_d; k_\infty)\in\N_0^{(\infty)}$
and $0\le s\le k_\infty$, 
we compute
the coefficient of $w_{(\kappa;s)}=X^{k_1}Y\cdots X^{k_d}YX^s$ in the expansion 
of the first factor of the above right hand side as follows:
\begin{align*}
&\sum_{d=0}^\infty 
\sum_{\btau\in\N_0^d} 
\lp \SS^{(\btau;0)},J \rp 
\lp \MM^{(\btau;0)},w_{(\bkappa;s)} \rp 
=\left\lp\sum_{d=0}^\infty 
\sum_{\btau\in\N_0^d} \lp\SS^{(\btau;0)},J \rp \MM^{(\btau;0)},
w_{(\bkappa;s)} \right\rp \\
=&\left\lp\sum_{d=0}^\infty 
\sum_{\btau\in\N_0^d} 
\biggl\lp\SS^{(\btau;0)},\sum_{\bu\in \N_0^{(\infty)}}(J,w_\bu)w_\bu\biggr\rp
 \MM^{(\btau;0)},
w_{(\bkappa;s)}\right\rp \\
=&\sum_{\bu} \lp J,w_\bu \rp \sum_{d=0}^\infty 
\sum_{\btau\in\N_0^d} \lp \SS^{(\btau;0)},w_\bu \rp
\lp \MM^{(\btau;0)}, w_{(\bkappa;s)} \rp .
\end{align*}
But since $\lp\SS^{(\btau;0)},w_\bu\rp \lp \MM^{(\btau;0)}, w_{(\bkappa;s)} \rp$ 
survives only when 
$\dep(\btau;0)=\dep(\bkappa;s)=\dep(\bu)$ and $|(\btau;0)|=|(\bkappa;s)|=|\bu|$,
the summation $\sum_\bu$ in the above last side occurs only for 
those $\bu$ of the form 
$(\bkappa+\bs;0)\in \N_0^{(\infty)}$ 
with $\bs=(s_1,\dots,s_d)\in\Z^d$, 
$s=s_1+\cdots+s_d\ge 0$
(cf. also Remark \ref{rem2.6}).  
Then, it follows from Lemma \ref{lem2} that the above last side is equal to 
$$
\sum_{d=0}^\infty \sum_{\substack{\bs\in\N_0^d \\ |(\bs;0)|=s}} 
\lp J, w_{(\bkappa+\bs;0)} \rp
(-1)^s
\binom{k_1+s_1}{k_1}\cdots \binom{k_d+s_d}{k_d}.
$$
(Note: The prescribed condition $\bs\in\Z^d$ has been replaced 
with $\bs\in\N_0^d$ for a posteriori survivals of binomial factors).
{}From this and (\ref{J-expansion}) together with
$\lp J, w_{(\bkappa+\bs;0)} \rp=c_{(k_1+s_1,\dots,k_d+s_d;0)}$, 
we conclude the assertion.
\end{proof}

\begin{Note}[{\it Alternative proof of Lemma} \ref{lem2}] \label{Quest4.3}
In the right hand side of Lemma \ref{lem2}, the quantity
$\binom{k_1+s_1}{k_1}\cdots \binom{k_d+s_d}{k_d}$ can be interpreted as
the pairing
$\lp w_{(k_1,\dots,k_d;0)}\sha X^s, w_{(k_1+s_1,\dots,k_d+s_d;0)} \rp$.
Therefore, the assertion of Lemma is equivalent to the identity
\begin{equation}
\sum_{\btau\in\N_0^d} 
\lp\SS^{(\btau;0)},w_{(\bkappa+\bs;0)} \rp \cdot \lp\MM^{(\btau;0)}, w_{(\bkappa;0)}\!\cdot\!\! X^s\rp
=(-1)^s 
\lp w_{(\bkappa;0)}\sha X^s, w_{(\bkappa+\bs;0)} \rp
\label{alterLemma}
\end{equation}
for $\bkappa=(k_1,\dots,k_d)\in\N_0^d$, $\bs=(s_1,\dots,s_d)\in\Z^d$ 
satisfying $s=s_1+\cdots+s_d\ge 0$ and $\bkappa+\bs\in\N_0^d$.
We now give an alternative proof for it using the Magnus/demi-shuffle duality:
First, by Corollary \ref{coro}, we have 
$w_{(\bkappa;0)}=\sum_\bbr\la \MM^{(\bbr)},w_{(\bkappa;0)}\ra \SS^{(\bbr)}$ 
and $w_{(\bkappa+\bs;0)}=\sum_\bt\la \SS^{(\bt)},w_{(\bkappa+\bs;0)}\ra \MM^{(\bt)}$
so that the RHS of (\ref{alterLemma}) can be written as
\begin{align}
\label{RHS4.2}
&(-1)^s 
\lp w_{(\bkappa;0)}\sha X^s, w_{(\bkappa+\bs;0)} \rp \\
&=(-1)^s \sum_{\bbr,\bt\in\N_0^{(\infty)}}
\lp \SS^{(\bbr)}\sha X^s, \MM^{(\bt)} \rp
\lp \MM^{(\bbr)} ,w_{(\bkappa;0)} \rp
\lp \SS^{(\bt)} ,w_{(\bkappa+\bs;0)}\rp
\notag \\
&=(-1)^s \sum_{\brho\in\N_0^d} 
\lp \MM^{(\brho;0)} ,w_{(\bkappa;0)}\rp
\lp  \SS^{(\brho;s)}, w_{(\bkappa+\bs;0)} \rp.
\notag
\end{align}
Here in the second equality, we use the fact that
$\lp  \MM^{(\bbr)}, w_{(\bkappa;0)} \rp$ 
survives only if $\bbr=(\brho;0)\in\N_0^{(\infty)}$ for some $\brho\in\N_0^d$
and then apply the duality (Theorem \ref{orthogonality})
to $\lp \SS^{(\bbr)}\sha X^s, \MM^{(\bt)} \rp$
with $\SS^{(\brho;0)}\sha X^s=\SS^{(\brho;s)}$ 
(cf. Definitions \ref{defMagnus} and \ref{demi-shuffle}). 

On the other hand, in the LHS of  (\ref{alterLemma}),
one observes that nontrivial terms of the summation arise
only from those $\btau=(\tau_1,\dots,\tau_d)\in\N_0^d$ subject to
$\sum_{i=1}^d \tau_i=s+\sum_{i=1}^d \kappa_i$ (constant). 
But then, the last binomial factor in (\ref{MagnusBino}) for
$ \lp\MM^{(\btau;0)}, w_{(\bkappa;0)}\!\cdot\!\! X^s\rp
=\magnus{(\tau_1,\dots,\tau_d;0)}{(\kappa_1,\dots,\kappa_d;s)}
$
equals $\binom{\tau_d}{s}$ which is non-zero only if
$\tau_d\ge s$.
Therefore, 
the summation $\sum_\btau$ may be replaced by 
$\sum_\brho$ with $\brho=\btau-(\mathbf{0},s)$
in $\N_0^d$ (where $\mathbf{0}\in\N_0^{d-1}$: the zero vector).
Thus,
the LHS of (\ref{alterLemma}) can be written as
\begin{align}
\label{LHS4.2}
&\sum_{\btau\in\N_0^d} 
\lp\SS^{(\btau;0)},w_{(\bkappa+\bs;0)} \rp \cdot 
\lp\MM^{(\btau;0)}, w_{(\bkappa;0)}\!\cdot\!\! X^s\rp \\
&=
\sum_{\brho\in\N_0^d} 
\lp\SS^{(\brho+(\mathbf{0},s);0)},w_{(\bkappa+\bs;0)} \rp \cdot 
\lp\MM^{(\brho+(\mathbf{0},s);0)}, w_{(\bkappa;s)} \rp. 
\notag
\end{align}
Comparing summands of the above (\ref{RHS4.2}) and (\ref{LHS4.2}) for 
individual $\brho\in\N_0^d$ in view of coefficients of 
monomial expansions of demi-shuffle/Magnus polynomials 
(Lemma \ref{dS-mono} and (\ref{Mag-Mono})), we reduce 
the formula (\ref{alterLemma}) to the following elementary identity
for $\bkappa=(k_i),\brho=(r_i)\in\N_0^d$ and $\bs=(s_i)\in\Z^d$
satisfying
$\sum_{i=1}^d k_i=\sum_{i=1}^d r_i$, $\bs+\bkappa\in \N_0^d$
and $s:=\sum_{i=1}^d s_i\ge 0$:
\begin{equation}
\binom{(\bkappa+\bs;0)}{(\brho;s)}
\magnus{(\brho;0)}{(\bkappa;0)}
=
(-1)^s
\binom{(\bkappa+\bs;0)}{(\brho+(\mathbf{0},s);0)}
\magnus{(\brho+(\mathbf{0},s);0)}{(\bkappa,s)}
\end{equation} 
that is an immediate consequence of definitions
of these symbols $\{^*_* \}$, $(^*_*)$.
(Observe that only difference between the corresponding symbols
occurs from the last binomial coefficient in  (\ref{ArrayBino})
and (\ref{MagnusBino}).)
\qed
\end{Note}

\begin{Example}
The following shows an output of a group-like element 
$J=\sum_{w\in W}c_w w$ of $R\lala X,Y\rara$ with the
shuffle relation  (which is necessary and 
sufficient for group-likeness due to Ree \cite{Ree}) 
counted from a computation using software \cite{Maple}
up to total degree 4.

$ \displaystyle J=
1+c_{X} X +c_{Y} Y +\frac{c_{X}^{2} \mathit{XX}}{2}+c_{\mathit{XY}} \mathit{XY} +\left(c_{X} c_{Y}-c_{\mathit{XY}}\right) \mathit{YX} 
+\frac{c_{Y}^{2} \mathit{YY}}{2}+\frac{c_{X}^{3} \mathit{XXX}}{6} \\
+c_{\mathit{XXY}} \mathit{XXY} +\left(c_{X} c_{\mathit{XY}}-2 c_{\mathit{XXY}}\right) \mathit{XYX} +c_{\mathit{XYY}} \mathit{XYY} +\left(\frac{1}{2} c_{X}^{2} c_{Y}-c_{X} c_{\mathit{XY}}+c_{\mathit{XXY}}\right) \mathit{YXX} 
\\
+\left(c_{\mathit{XY}} c_{Y}-2 c_{\mathit{XYY}}\right) \mathit{YXY} 
+\left(\frac{1}{2} c_{X} c_{Y}^{2}-c_{\mathit{XY}} c_{Y}+c_{\mathit{XYY}}\right) \mathit{YYX} +\frac{c_{Y}^{3} \mathit{YYY}}{6}
\\
+\frac{c_{X}^{4} \mathit{XXXX}}{24}+c_{\mathit{XXXY}} \mathit{XXXY} +\left(c_{X} c_{\mathit{XXY}}-3 c_{\mathit{XXXY}}\right) \mathit{XXYX} +c_{\mathit{XXYY}} \mathit{XXYY} 
\\
+\left(\frac{1}{2} c_{X}^{2} c_{\mathit{XY}}-2 c_{X} c_{\mathit{XXY}}+3 c_{\mathit{XXXY}}\right) \mathit{XYXX} +\left(\frac{c_{\mathit{XY}}^{2}}{2}-2 c_{\mathit{XXYY}}\right) \mathit{XYXY} 
\\
+\left(c_{X} c_{\mathit{XYY}}-\frac{c_{\mathit{XY}}^{2}}{2}\right) \mathit{XYYX} 
+c_{\mathit{XYYY}} \mathit{XYYY} +\left(\frac{1}{6} c_{X}^{3} c_{Y}-\frac{1}{2} c_{X}^{2} c_{\mathit{XY}}+c_{X} c_{\mathit{XXY}}-c_{\mathit{XXXY}}\right) \mathit{YXXX}
\\
+\left(c_{\mathit{XXY}} c_{Y}-\frac{c_{\mathit{XY}}^{2}}{2}\right) \mathit{YXXY} 
+\left(c_{X} c_{\mathit{XY}} c_{Y}-2 c_{X} c_{\mathit{XYY}}-2 c_{\mathit{XXY}} c_{Y}+\frac{1}{2} c_{\mathit{XY}}^{2}+2 c_{\mathit{XXYY}}\right) \mathit{YXYX}
\\ 
+\left(c_{\mathit{XYY}} c_{Y}-3 c_{\mathit{XYYY}}\right) \mathit{YXYY} 
+\left(\frac{1}{4} c_{X}^{2} c_{Y}^{2}-c_{X} c_{\mathit{XY}} c_{Y}+c_{X} c_{\mathit{XYY}}+c_{\mathit{XXY}} c_{Y}-c_{\mathit{XXYY}}\right) \mathit{YYXX} 
\\
+\left(\frac{1}{2} c_{\mathit{XY}} c_{Y}^{2}-2 c_{\mathit{XYY}} c_{Y}+3 c_{\mathit{XYYY}}\right) \mathit{YYXY} 
+\left(\frac{1}{6} c_{X} c_{Y}^{3}-\frac{1}{2} c_{\mathit{XY}} c_{Y}^{2}+c_{\mathit{XYY}} c_{Y}-c_{\mathit{XYYY}}\right) \mathit{YYYX}
\\
+\frac{c_{Y}^{4} \mathit{YYYY}}{24} 
\quad+\quad $(terms of degree $\ge 5$).

\medskip
In the above computation, one observes that the coefficient $c_{XYXY}$
is expressed by lower simpler coefficients of $J$. This does not follow 
from Theorem \ref{LeMuFu}, however, does reflect the fact 
that $XYXY$ is not a Lyndon word.
Discussions on the most economical expression using only the 
coefficients of Lyndon words can be found in \cite{MPH}. 
\end{Example}

\begin{Note}
In the modern theory of multiple zeta values, a certain standard solution 
$G_0^z(X,Y)\in \C\lala X,Y\rara$ to the KZ-equation on $z\in \C-\{0,1\}$
is known as the generating function for the multiple polylogarithms (MPL).
It is also used to define the Drinfeld associator $\Phi(X,Y) \in\C\lala X,Y\rara$.  
The coefficients of $w_{(k_1,\dots,k_d;0)}$ in $\Phi(X,Y)$ (resp. in $G_0^z(X,Y)$)
are regular multiple zeta values (resp. regular MPL) of multi-index $(k_1,\dots,k_d)$,
but the other coefficients are in general not.  
Le-Murakami \cite{LM}, Furusho \cite{F} derived formulas that express
all coefficients of $\Phi(X,Y)$ and $G_0^z(X,Y)$ by those `regular' coefficients
explicitly. 
In \cite[Remark 2]{N21}, the author posed a question if it could be a similar
case for `$\ell$-adic Galois associator $f_\sigma^z(X,Y)\in\Q_\ell\lala X,Y\rara$',
in which context analytic theory of KZ-equation is unavailable yet. 
Since $f_\sigma^z(X,Y)$ is by definition a group-like element, 
the above Theorem \ref{LeMuFu} answers the question affirmatively.
\end{Note}

\begin{Note}
\label{zinbiel}
A noteworthy notion closely related to our $\SS^{(\bk)}$,
$\SS^{(\bk,\blambda)}$ is
the free Zinbiel (or, dual Leibniz) algebra 
studied by J.-L. Loday \cite{Lo95}, I. Dokas \cite{Do10}, F. Chapoton \cite{Ch21} et.al.
Let $V$ be a vector space with a basis $\mathfrak{B}=\{X_0,X_1,\dots\}$ and 
$T(V)$ be the tensor algebra (free associative algebra) generated by the
letters in $\mathfrak{B}$.  Loday introduced the ``half-shuffle'' product $\prec$
in $T(V)$ as the linear extension of the binary product on words 
given by:
$$
(x_0x_1\cdots x_p)\prec (x_{p+1}\cdots x_{p+q})
:=
x_0\cdot \bigl( (x_1\cdots x_p)\sha (x_{p+1}\cdots x_{p+q})\bigr),
$$
where $x_i$ are letters in $\mathfrak{B}$ ($i=0,\dots, p+q$).
It is remarkable that, while the usual shuffle product $w\,\sha\, w'=w\prec w'+w'\prec w$ 
is associative (and commutative), the half-shuffle product 
$\prec$ is not even associative --- however satisfying 
$(w_1\prec w_2)\prec w_3=w_1\prec (w_2\prec w_3)+w_1\prec (w_3\prec w_2)$.
We may relate the `Zinbiel monomials' 
with our demi-shuffle polynomials 
$\SS^{(\bk,\blambda)}$ in Definition \ref{def3.1} as follows:
Write $\ast\mapsto \overline{\ast}$ for the anti-automorphism of 
$R\la X,Y_\lambda\ra_{\lambda\in\Lambda}$
reversing the order of letters in each word, e.g., 
$\overline{XXY_\lambda}=Y_\lambda XX$.
Then,  
\begin{equation}
\label{semi-shuffle}
\overline{\SS^{(\bk,\blambda)}}=
X^{k_\infty}\sha \bigl(...
\bigl(Y_{\lambda_d}X^{k_d} \prec\bigl( Y_{\lambda_{d-1}}X^{k_{d-1}} \prec\bigl(
\cdots 
\prec \bigl( Y_{\lambda_2}X^{k_2} \prec Y_{\lambda_1}X^{k_1} \bigr)\bigr)...\bigr)
\end{equation}
for $\bk=(k_1,\dots,k_d; k_\infty)\in\N_0^{(\infty)}$,
$\blambda=(\lambda_1,\dots,\lambda_d)\in\Lambda^d$.
These polynomials also appeared in \cite[Proposition 5.10]{Minh2019} to 
illustrate the coefficients (of the main factor) of a solution of the 
KZ-equation expanded in 
$(\mathrm{ad}_{-X}^{k_1}Y)\cdots(\mathrm{ad}_{-X}^{k_d}Y)$.
We also learn from a paper by L. Foissy and F. Patras \cite{FP13} 
that already in M.-P. Sch\"utzenberger's work \cite{Sch}
is found an axiomatic treatment of half-shuffle combinatorics 
on words named ``alg\`ebre de shuffle''.

Calling  $\SS^{(\bk)}$, $\SS^{(\bk,\blambda)}$ 
`demi-shuffle' in Definitions \ref{demi-shuffle}, \ref{def3.1} 
or reserving `semi-shuffle' for 
names of anything else
might keep a moderate distance from the already overwhelming naming
`half-shuffle' of the operation $\prec$ in literature.
 \end{Note}

\medskip
{\it Acknowledgement}:
The author is grateful to Hidekazu Furusho for hinting a positive answer to the 
question posed in \cite[Remark 2]{N21} toward the form of
Theorem \ref{LeMuFu}
of the present paper, and for valuable comments and information 
on what is mentioned in part of Note \ref{zinbiel}.
He also thanks Densuke Shiraishi for stimulating discussions that
share awareness of various open problems around $\ell$-adic Galois multiple polylogarithms.  
The author would like to express his gratitude to the referees for useful 
comments that helped to improve the presentation of this paper.
This work was supported by JSPS KAKENHI Grant Numbers JP20H00115.

\ifx\undefined\bysame
\newcommand{\bysame}{\leavevmode\hbox to3em{\hrulefill}\,}
\fi

\end{document}